\numberwithin{equation}{section}
\newtheorem{theorem}{Theorem}[section]
\newtheorem{corollary}[theorem]{Corollary}
\newtheorem{lemma}[theorem]{Lemma}
\newtheorem{remark}{Remark}
\def\E{\mathsf{E}}
\def\e{\mathrm{e}}
\title{Evaluating moments of length of Pitman partition}
\date{}
\author{Koji Tsukuda\footnote{Faculty of Mathematics, Kyushu University, 744 Motooka, Nishi-ku, Fukuoka-shi, Fukuoka 819-0395, Japan.}}
\begin{document}
\maketitle

\allowdisplaybreaks[3]

\begin{abstract}\noindent
The Pitman sampling formula has been intensively studied as a distribution of random partitions.
One of the objects of interest is the length $K (= K_{n,\theta,\alpha})$ of a random partition that follows the Pitman sampling formula, where $n\in\mathbb{N}$, $\alpha\in(0,\infty)$ and $\theta > -\alpha$ are parameters.
This paper presents asymptotic evaluations of its $r$-th moment $\mathsf{E}[K^r]$ ($r=1,2,\ldots$) under two asymptotic regimes.
In particular, the goals of this study are to provide a finer approximate evaluation of $\mathsf{E}[K^r]$ as $n\to\infty$ than has previously been developed and to provide an approximate evaluation of $\mathsf{E}[K^r]$ as the parameters $n$ and $\theta$ simultaneously tend to infinity with $\theta/n \to 0$.
The results presented in this paper will provide a more accurate understanding of the asymptotic behavior of $K$.
\end{abstract}

\vspace{5truemm}
\textbf{Keywords}: Pitman $\alpha$-diversity; Pitman sampling formula; random partition.\par
\textbf{MSC2010 subject classifications}: 60C05, 60F05, 62E20.

\section{Introduction}\label{sec1}

Let  $n$ be a positive integer, and let $c_1,\ldots,c_n$ be the component counts of an integer partition of $n$, which means that the partition has $c_i$ parts of size $i$ $(i=1,\ldots,n)$.
Note that $c_1,\ldots,c_n$ satisfy $n=\sum_{i=1}^n i c_i$.
Define a set $\mathcal{P}_n=\{ (c_1,\ldots,c_n) \in (\mathbb{N}\cup\{0\})^{n}; \sum_{i=1}^n i c_i = n \}$.
Let $\alpha\in[0,1)$, $\theta\in(-\alpha,\infty)$.
A $\mathcal{P}_n$-valued random variable $\textbf{C} = (C^{n,\theta,\alpha}_1,\ldots,C^{n,\theta,\alpha}_n)$ is component counts of a \textit{Pitman partition} when its distribution is given by
\begin{equation}\label{PSF}
 {\sf P} (\textbf{C} = (c_1,\ldots,c_n)) = n! \frac{(\theta)_{k; \alpha}}{(\theta)_n} \prod_{i=1}^n \left( \frac{(1-\alpha)_{i-1} }{i!} \right)^{c_i} \frac{1}{c_i!} \quad ( (c_1,\ldots,c_n) \in \mathcal{P}_n) ,
\end{equation}
where $k=\sum_{i=1}^n c_i$,
\[ (x)_{i; y}=
\begin{cases}
1 & (i=0) \\
x(x+ y) (x+2 y) \cdots (x+(i-1) y) & (i=1,2,\ldots)
\end{cases}
 \quad (x > -y; y \geq 0), \]
and $(x)_{i}=(x)_{i;1}$ $(i=0,1,2,\ldots; x> -1)$.
The distribution \eqref{PSF}, called the \textit{Pitman sampling formula}, was introduced by Jim Pitman; see, for example, \cite{RefP95,RefP06}.
A special case $\alpha=0$ of \eqref{PSF} is known as the Ewens sampling formula, which was introduced by Ewens~\cite{RefE} in the context of population genetics.
Henceforth, unless otherwise mentioned, we consider the case $\alpha\neq0$.
The distribution of the length $K (=K_{n,\theta,\alpha}) = \sum_{i=1}^n C^{n,\theta,\alpha}_i$ of a Pitman partition is given by
\begin{equation}\label{PL}
 {\sf P}(K= k ) =  \frac{c(n,k,\alpha)}{\alpha^k}  \frac{(\theta)_{k; \alpha}}{(\theta)_n} \quad (k=1,\ldots,n), 
\end{equation}
where $c(n,k,\alpha)=(-1)^{n-k} {\rm C}(n,k,\alpha)$ and ${\rm C}(n,k,\alpha)$ is the generalized Stirling number or the C-number of Charalambides and Singh~\cite{RefCS}; see, for example, Yamato, Sibuya and Nomachi~\cite{RefYSN}.
This paper discusses an asymptotic property of $K$.

Random partition models have been received considerable attention, not only because they are interesting as mathematical models, but also because they relate to broad scientific fields; see, for example, Crane~\cite{RefCr} and Johnson, Kotz and Balakrishnan~\cite{RefJKB} (Chapter 41, its write-up was provided by S.~Tavar\'e and W.J.~Ewens).
As a typical distribution of random partiton models, the distribution \eqref{PSF} has been intensively studied.
In particular, the properties of \eqref{PL} have been investigated by Dolera and Favaro~\cite{RefDF}, Favaro, Feng and Gao~\cite{RefFFG}, Feng and Hoppe~\cite{RefFH}, Pitman~\cite{RefP97,RefP99}, Yamato and Sibuya~\cite{RefYS}, and Yamato, Sibuya and Nomachi~\cite{RefYSN}. 
A prominent limit theorem associated with \eqref{PL} is
\begin{equation}\label{cid}
 \frac{K}{n^\alpha} \Rightarrow S_{\alpha,\theta} 
\end{equation}
as $n\to \infty$, where $\Rightarrow$ denotes convergence in distribution and the limit random variable $S_{\alpha,\theta}$, called the \textit{Pitman $\alpha$-diversity}, is a continuous random variable whose distribution has the density
\[ \frac{\Gamma(\theta+1)}{\Gamma(\frac{\theta}{\alpha}+1)} x^{\frac{\theta}{\alpha}} g_\alpha(x) \quad ( x >0 ), \]
where $g_\alpha(\cdot)$ is the function satisfying
\[ \int_0^\infty x^p g_\alpha (x) dx = \frac{\Gamma(p+1)}{\Gamma(p\alpha +1)} \quad (p>-1); \]
see, for example, Pitman~\cite{RefP97,RefP99} or Yamato and Sibuya~\cite{RefYS}.
Note that $g_\alpha(x)$ can be written as
\[
g_{\alpha}(x) = \frac{1}{\pi \alpha} \sum_{i=1}^{\infty} \frac{(-1)^{i+1}}{i!} \Gamma(i \alpha +1) x^{i-1} \sin(\pi i \alpha ) \quad (x>0).
\]
A summary of the proof of \eqref{cid} given by Yamato and Sibuya~\cite{RefYS} is as follows: 
\begin{enumerate}
\item For $r=1,2,\ldots$, it holds that
\begin{equation}\label{YS1}
{\sf E} \left[ K^r \right] = \sum_{i=0}^r (-1)^{r-i} \left( 1 +\frac{\theta}{\alpha} \right)_i R \left(r, i, \frac{\theta}{\alpha}\right) \frac{\Gamma(\theta + i \alpha +n) }{\Gamma(\theta+n) } \frac{\Gamma(\theta+1)}{\Gamma(\theta+i \alpha +1)},
\end{equation}
where $R(\cdot,\cdot,\cdot)$ is the weighted Stirling number of the second kind introduced by Carlitz~\cite{RefC}.
\item Using expression \eqref{YS1}, the Stirling formula yields
\begin{equation}\label{YS2}
{\sf E}\left[ \left( \frac{K}{n^\alpha} \right)^r \right] = \left(1 + \frac{\theta}{\alpha} \right)_{r} \frac{\Gamma(\theta+1)}{\Gamma(\theta+r\alpha+1)}  + o(1)
\quad (r=1,2,\ldots)
\end{equation}
as $n\to\infty$.
\item Hence, \eqref{cid} follows from the method of moments.
\end{enumerate}

In this paper, the moments $\E[K^r]$ ($r=1,2,\ldots$) are investigated in more detail than \eqref{YS2}. 
In particular, there are two goals in this paper: the first is to provide an approximate evaluation of $\E[K^r]$ as $n\to\infty$ that is finer than that of \eqref{YS2}; the second is to provide an approximate evaluation of $\E[K^r]$ as the parameters $n$ and $\theta$ simultaneously tend to infinity with $\theta/n \to 0$.
The phenomenon described by \eqref{cid} is attractive, and our results will provide a more accurate understanding of \eqref{cid}.

\begin{remark}
There are some known results stronger than \eqref{cid}.
In particular,
\begin{itemize}
\item $K/n^\alpha$ converges to $S_{\alpha,\theta}$ almost surely and in $p$-th mean for any $p>0$.
\item A Berry--Esseen-type theorem holds: When $\alpha\in(0,1)$ and $\theta >5$, $\sup_{x\geq 0}|\mathsf{P}(K/n^\alpha \leq x) - \mathsf{P}(S_{\alpha,\theta}\leq x)| \leq {C(\alpha,\theta)}/{n^\alpha}$ holds for $n\in\mathbb{N}$, where $C(\alpha,\theta)$ is a constant depending only on $\alpha$ and $\theta$.
\end{itemize}
For details, see Dolera and Favaro~\cite{RefDF} and Pitman~\cite{RefP06}. 
\end{remark}

\section{Asymptotic regime}
In this paper, we consider two asymptotic regimes.
The first is $n\to\infty$ with fixed $\theta$, and the second is 
\begin{equation}\label{AR}
n\to\infty, \ \theta \to \infty,\ \frac{\theta}{n} \to 0.
\end{equation}
The former ($n\to\infty$ with fixed $\theta$) has been frequently considered.
When $\alpha=0$, \eqref{AR} has also been considered in some studies; see Remark~\ref{rems2} below.
However, when $\alpha\neq0$, \eqref{AR} has not been considered, although it also seems natural.
Indeed, in the application of \eqref{PSF} to microdata risk assessment by Hoshino~\cite{RefHo}, the estimates of $\theta$ take large values (e.g., $523, \ldots, 21298 $, where $n = 27320$); see Tables 3--6 of \cite{RefHo}.
Throughout the paper, we assume that $\theta>0$ when considering the regime \eqref{AR}.

\begin{remark}\label{rems2}
When $\alpha=0$, the asymptotic regime in which $n$ and $\theta$ simultaneously tend to infinity has been considered by Feng~\cite{RefFa} and Tsukuda~\cite{RefT17,RefT19,RefT20}.
In particular, \eqref{AR} is Case D in Section 4 of \cite{RefFa}.
\end{remark}

\begin{remark}
For the Pitman sampling formula, the asymptotic regime $\theta\to\infty$ with fixed $n$ was considered by Kerov~\cite{RefK}.
Moreover, Feng~\cite{RefFb} considered the asymptotic regime $\theta\to\infty$ in studying the Pitman--Yor process, which is closely related to the Pitman sampling formula.
For details of the Pitman--Yor process, see, for example, Pitman and Yor~\cite{RefPY}.
\end{remark}

\section{Results}

\subsection{Asymptotic evaluation as $n\to\infty$ with fixed $\theta$}
First, we provide an evaluation that is finer than that of \eqref{YS2} under the asymptotic regime $n\to\infty$ with fixed $\theta$.

\begin{theorem}
%[Evaluating moment of $K$ as $n\to\infty$ with fixed $\theta$]
\label{thm1}
Suppose that $\alpha>0$.
For $r=1,2,\ldots$, it holds that
\begin{eqnarray*}
{\sf E}\left[ \left( \frac{K}{n^\alpha} \right)^r \right]
&=& \left(1+ \frac{\theta}{\alpha} \right)_{r} \frac{\Gamma(\theta+1)}{\Gamma(\theta+r\alpha+1) } \left[
 1
- \left\{ \frac{r(r-1)\alpha}{2} + r\theta \right\} \frac{\Gamma(\theta+r\alpha)}{\Gamma(\theta+(r-1)\alpha+1)} \frac{1}{n^\alpha} 
\right]
\\&&\quad
+ O\left( \frac{1}{n^{2\alpha}} + \frac{1}{n} \right)  
\end{eqnarray*}
as $n\to\infty$.
\end{theorem}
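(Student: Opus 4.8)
The plan is to start from the exact formula \eqref{YS1} for $\mathsf{E}[K^r]$ and carry out a more careful asymptotic analysis of each summand than the crude Stirling estimate that yields \eqref{YS2}. Rewriting \eqref{YS1} in the normalized form, we have
\[
\mathsf{E}\left[\left(\frac{K}{n^\alpha}\right)^r\right]
= \sum_{i=0}^r (-1)^{r-i}\left(1+\frac{\theta}{\alpha}\right)_i R\!\left(r,i,\frac{\theta}{\alpha}\right)\frac{\Gamma(\theta+i\alpha+n)}{\Gamma(\theta+n)}\frac{\Gamma(\theta+1)}{\Gamma(\theta+i\alpha+1)}\frac{1}{n^{r\alpha}},
\]
so the whole problem reduces to a precise expansion of the ratio $\Gamma(\theta+i\alpha+n)/\{\Gamma(\theta+n)\,n^{i\alpha}\}$ for each fixed $i$. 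The key tool is the classical asymptotic expansion $\Gamma(n+a)/\Gamma(n+b) = n^{a-b}\bigl(1 + \tfrac{(a-b)(a+b-1)}{2n} + O(n^{-2})\bigr)$; applying it with $a=\theta+i\alpha$ and $b=\theta$ gives
\[
\frac{\Gamma(\theta+i\alpha+n)}{\Gamma(\theta+n)\,n^{i\alpha}} = 1 + \frac{i\alpha(2\theta+i\alpha-1)}{2n} + O\!\left(\frac{1}{n^2}\right),
\]
and since the correction here is $O(1/n)$ it is absorbed into the stated $O(n^{-2\alpha}+n^{-1})$ error term. This means the $1/n^\alpha$-order correction in the theorem cannot come from the Gamma ratios at all.

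Instead, the $1/n^\alpha$ term must come from the interplay between the $i=r$ summand (which gives the leading term) and the $i=r-1$ summand. The plan is therefore: first identify that the leading term is precisely the $i=r$ contribution, $\left(1+\tfrac{\theta}{\alpha}\right)_r \tfrac{\Gamma(\theta+1)}{\Gamma(\theta+r\alpha+1)}$, using $R(r,r,\tfrac{\theta}{\alpha})=1$; second, extract the $i=r-1$ summand, which carries a factor $n^{(r-1)\alpha}/n^{r\alpha} = n^{-\alpha}$ relative to the overall $n^{-r\alpha}$ and hence is exactly of order $1/n^\alpha$; third, observe that all summands with $i\le r-2$ contribute at order $n^{-2\alpha}$ or smaller, so they too fall inside the error term. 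Evaluating the $i=r-1$ term requires the explicit value $R(r,r-1,\tfrac{\theta}{\alpha})$ of the weighted Stirling number, which by Carlitz's recursions equals $\binom{r}{2} + r\cdot\tfrac{\theta}{\alpha}$ (i.e.\ the ordinary Stirling number $\binom{r}{2}$ shifted by the weight term $r\theta/\alpha$); combined with $\left(1+\tfrac{\theta}{\alpha}\right)_{r-1}$, $\Gamma(\theta+1)/\Gamma(\theta+(r-1)\alpha+1)$, and the sign $(-1)^{1}=-1$, this should reproduce the bracketed factor $-\{\tfrac{r(r-1)\alpha}{2}+r\theta\}\tfrac{\Gamma(\theta+r\alpha)}{\Gamma(\theta+(r-1)\alpha+1)}$ after factoring out the leading term. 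One checks that $\left(1+\tfrac{\theta}{\alpha}\right)_{r-1}/\left(1+\tfrac{\theta}{\alpha}\right)_r = 1/(1+\tfrac{\theta}{\alpha}+r-1) = \alpha/(\theta+r\alpha)$, and that $\{\binom{r}{2}+r\tfrac{\theta}{\alpha}\}\cdot\tfrac{\alpha}{\theta+r\alpha}\cdot\tfrac{\Gamma(\theta+r\alpha+1)}{\Gamma(\theta+(r-1)\alpha+1)} = \{\tfrac{r(r-1)\alpha}{2}+r\theta\}\tfrac{\Gamma(\theta+r\alpha)}{\Gamma(\theta+(r-1)\alpha+1)}$, which is exactly the claimed coefficient.

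I expect the main obstacle to be twofold. First, one must pin down the precise value of the weighted Stirling number $R(r,r-1,\tfrac{\theta}{\alpha})$ and, more delicately, control the growth of $R(r,i,\tfrac{\theta}{\alpha})$ for $0\le i\le r-2$ well enough to be sure those terms are genuinely $O(n^{-2\alpha})$; since $R(r,i,x)$ is a polynomial in $x$ of degree $r-i$, and $x=\theta/\alpha$ is fixed here, this is a finite bound but needs to be stated cleanly. Second, one must carefully track which of the two error sources — the $O(1/n^2)$ from the Gamma-ratio expansions (scaled up by the leading $O(1)$ factors, giving $O(1/n)$) versus the $O(n^{-2\alpha})$ from the $i\le r-2$ summands — dominates; since $\alpha\in(0,1)$ can be either smaller or larger than $1/2$, neither dominates universally, which is exactly why the error is written as the sum $O(n^{-2\alpha}+n^{-1})$. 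The bookkeeping is routine but must be done with care so that no term of order $1/n^\alpha$ is accidentally dropped or double-counted.
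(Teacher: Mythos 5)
Your proposal is correct and follows essentially the same route as the paper: isolate the $i=r$ and $i=r-1$ summands of \eqref{YS1}, expand the Gamma ratios to show their $O(1/n)$ corrections are absorbed into the error, evaluate $R(r,r-1,\theta/\alpha)=\binom{r}{2}+r\theta/\alpha$ via Carlitz's (3.2), and bound the $i\le r-2$ terms by $O(n^{-2\alpha})$ after normalization. The algebraic identity you check, $\bigl(1+\tfrac{\theta}{\alpha}\bigr)_{r-1}\big/\bigl(1+\tfrac{\theta}{\alpha}\bigr)_{r}=\alpha/(\theta+r\alpha)$, is exactly the step the paper uses to put the coefficient into the stated factored form.
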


\begin{proof}
It follows from \eqref{YS1} that
\begin{eqnarray}
{\sf E}\left[ K^r \right] &=& 
\sum_{i=0}^{r-2} (-1)^{r-i} \left( 1 +\frac{\theta}{\alpha} \right)_i R \left(r, i, \frac{\theta}{\alpha}\right) \frac{\Gamma(\theta + i \alpha +n) }{\Gamma(\theta+n) } \frac{\Gamma(\theta+1)}{\Gamma(\theta+i \alpha +1)} \nonumber \\
&& - \left( 1 +\frac{\theta}{\alpha} \right)_{r-1} R \left(r, r-1, \frac{\theta}{\alpha}\right) \frac{\Gamma(\theta + (r-1) \alpha +n) }{\Gamma(\theta+n) } \frac{\Gamma(\theta+1)}{\Gamma(\theta+ (r-1) \alpha +1)} \nonumber \\
&& + \left( 1 +\frac{\theta}{\alpha} \right)_r R \left(r, r, \frac{\theta}{\alpha}\right) \frac{\Gamma(\theta + r \alpha +n) }{\Gamma(\theta+n) } \frac{\Gamma(\theta+1)}{\Gamma(\theta+ r \alpha +1)},
\label{YS1a}
\end{eqnarray}
where $\sum_{i=0}^{-1} a_i = 0$ for any sequence $\{a_1,a_2,\ldots \}$.
As 
\begin{equation}\label{stir}
 \Gamma(x) = \sqrt{2\pi} \e^{-x} x^{x-1/2} \left(1+ \frac{1}{12x} + O\left( \frac{1}{x^2} \right) \right) \quad (x\to\infty),
\end{equation}
it holds that
\[ \frac{\Gamma(\theta + i\alpha +n)}{\Gamma(\theta + n)} 
=  n^{i \alpha} \left[ 1 + \frac{i \alpha \{n(2\theta + i \alpha -1) + 2\theta^2 \}}{2n (n+\theta)} + O\left( \frac{1}{n^2} \right) \right]
= n^{i \alpha} \left( 1 + O\left( \frac{1}{n} \right) \right) \]
for $i=0,1,\ldots$.
The first term on the right-hand side of \eqref{YS1a} is $O(n^{(r-2)\alpha})$.
The second term on the right-hand side of \eqref{YS1a} is
\begin{eqnarray*}
\lefteqn{ - \left( 1 +\frac{\theta}{\alpha} \right)_{r-1} \left\{ \frac{r(r-1)}{2} + \frac{r \theta}{\alpha} \right\} \frac{ \Gamma(\theta+1)}{ \Gamma(\theta+ (r-1) \alpha +1)} n^{(r-1)\alpha} \left( 1 + O\left( \frac{1}{n} \right)  \right)  } \\
&=&  - \left( 1 +\frac{\theta}{\alpha} \right)_{r} \frac{\alpha}{\theta+\alpha r} \left\{ \frac{r(r-1)}{2} + \frac{r \theta}{\alpha} \right\} \frac{ \Gamma(\theta+1)}{ \Gamma(\theta+ (r-1) \alpha +1)} n^{(r-1)\alpha} +  O\left( n^{(r-1)\alpha -1} \right)  \\
&=&  - \left( 1 +\frac{\theta}{\alpha} \right)_{r} \frac{\Gamma(\theta+1)  }{ \Gamma(\theta+r \alpha +1) } \left\{ \frac{r(r-1) \alpha}{2} + r \theta \right\} \frac{ \Gamma(\theta+r \alpha)}{ \Gamma(\theta+ (r-1) \alpha +1)} n^{(r-1)\alpha} 
\\&& \quad
+  O\left( n^{(r-1)\alpha -1} \right),
\end{eqnarray*}
because (3.2) of Carlitz~\cite{RefC} implies that
\[ R \left( r, r-1 , \frac{\theta}{\alpha} \right) 
= \sum_{i=0}^{1} \binom{r}{i} \left( \frac{\theta}{\alpha} \right)^i S_2(r-i, r-1)
= S_2(r,r-1)+ \frac{r \theta}{\alpha} = \frac{r(r-1)}{2} + \frac{r\theta}{\alpha} , \]
where $S_2(\cdot,\cdot)$ is the Stirling number of the second kind.
The third term on the right-hand side of \eqref{YS1a} is
\[
\left( 1 +\frac{\theta}{\alpha} \right)_r  \frac{\Gamma(\theta+1)}{ \Gamma(\theta+ r \alpha +1)} n^{r \alpha} + O \left( n^{r \alpha -1} \right).
\]
As $\alpha\in(0,1)$, we have that
\[ O(n^{(r-2)\alpha}) +  O\left( n^{(r-1)\alpha -1} \right) +  O \left( n^{r \alpha -1} \right)  = O\left( n^{(r-2)\alpha} + n^{r\alpha-1} \right). \]
Therefore,
\begin{eqnarray*}
&& {\sf E}\left[ \left( \frac{K}{n^\alpha} \right)^r \right]  \\
&&= 
\left( 1 +\frac{\theta}{\alpha} \right)_r  \frac{\Gamma(\theta+1)}{ \Gamma(\theta+ r \alpha +1)}
\\ && \quad
- \left( 1 +\frac{\theta}{\alpha} \right)_{r} \frac{\Gamma(\theta+1)  }{ \Gamma(\theta+r \alpha +1) } \left\{ \frac{r(r-1) \alpha}{2} + r \theta \right\} \frac{ \Gamma(\theta+r \alpha)}{ \Gamma(\theta+ (r-1) \alpha +1)} \frac{1}{n^\alpha} 
\\ && \quad
+ O\left( \frac{1}{n^{2\alpha}} + \frac{1}{n} \right)   \\
&&= 
\left(1+ \frac{\theta}{\alpha} \right)_{r} \frac{\Gamma(\theta+1)}{\Gamma(\theta+r\alpha+1) } \left[
 1
- \left\{ \frac{r(r-1)\alpha}{2} + r\theta \right\} \frac{\Gamma(\theta+r\alpha)}{\Gamma(\theta+(r-1)\alpha+1)} \frac{1}{n^\alpha} 
\right]
\\ && \quad
+ O\left( \frac{1}{n^{2\alpha}} + \frac{1}{n} \right)  .
\end{eqnarray*}
This completes the proof.
\end{proof}

\begin{remark}\label{remC}
When $r > 3$, as $\theta > -\alpha$, it holds that
\[ \left\{ \frac{r(r-1) \alpha }{2} + r\theta \right\} \frac{\Gamma(\theta+r\alpha)}{\Gamma(\theta+(r-1)\alpha +1) }\frac{1}{n^\alpha} > 0. \]
This means that almost all moments of $K/n^\alpha$ are smaller than those of $S_{\alpha,\theta}$ for large $n$.
In particular, if $\theta>0$, then all moments of $K/n^\alpha$ are smaller than those of $S_{\alpha,\theta}$ for large $n$.
\end{remark}

In some cases, correcting some moments improves the quality of an approximation.
Thus, a primitive application of Theorem~\ref{thm1} is replacing $K/n^\alpha$ in \eqref{cid} by 
\[
\frac{K}{n^\alpha - \frac{\theta \Gamma(\theta+ \alpha) }{ \Gamma(\theta+1) }  }
\]
whose expectation is
\[ 
{\sf E}\left[ \frac{K}{n^\alpha - \frac{\theta \Gamma(\theta+ \alpha) }{ \Gamma(\theta+1) }  } \right] 
=  \frac{\Gamma(\theta+1)}{\alpha \Gamma(\theta+\alpha)}  + O \left( \frac{1}{n^{2\alpha}} + \frac{1}{n} \right).
 \]
When $\theta>0$, this correction enlarges $K/n^\alpha$, and is consistent with Remark~\ref{remC}.

\subsection{Asymptotic evaluation under \eqref{AR}}
Next, under the asymptotic regime of \eqref{AR}, we provide a new evaluation.

\begin{theorem}
%[Evaluating moment of $K$ under \eqref{AR}]
\label{mthA}
Suppose that $\alpha>0$.
For $r=1,2,\ldots$, it holds that
\[
 {\sf E}\left[ \left( \frac{\alpha K}{\theta \left\{ (\frac{n+\theta}{\theta})^\alpha - 1 \right\} } \right)^r \right] 
= 1 + O \left(  \frac{\theta^{2\alpha}}{n^{2\alpha}} +  \frac{\theta}{n} + \frac{1}{\theta} \right)
\]
under the asymptotic regime of \eqref{AR}; 
in particular, for $r=1,2,\ldots$, under the asymptotic regime of \eqref{AR}, if 
\[ \frac{\theta^{2\alpha+1}}{n^{2\alpha}} \to 0 \quad {\rm and} \quad
 \frac{\theta^2}{n} \to 0, \]
then
\[
 {\sf E}\left[ \left( \frac{\alpha K}{\theta \left\{ (\frac{n+\theta}{\theta})^\alpha - 1 \right\} } \right)^r \right] 
= 1 + r^2 \frac{\alpha(1-\alpha)}{2 \theta} 
+O \left(  \frac{\theta^{2\alpha}}{n^{2\alpha}} +  \frac{\theta}{n} + \frac{1}{\theta}\left(  \frac{\theta^\alpha}{n^\alpha } + \frac{1}{\theta} \right) \right).
\]
\end{theorem}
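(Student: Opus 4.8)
The approach I would take is to work directly from the exact formula \eqref{YS1} and expand each of the four $i$-dependent factors under the regime \eqref{AR}, keeping every error uniform over $i\le r$. By formula~(3.2) of Carlitz~\cite{RefC}, $R(r,i,\theta/\alpha)=\binom{r}{i}(\theta/\alpha)^{r-i}\bigl(1+\tfrac{i(r-i)\alpha}{2\theta}+O(\theta^{-2})\bigr)$, with no correction at $i=r$ since $R(r,r,\cdot)=1$; the Pochhammer factor is $(1+\theta/\alpha)_i=(\theta/\alpha)^i\prod_{m=1}^{i}\bigl(1+\tfrac{\alpha m}{\theta}\bigr)=(\theta/\alpha)^i\bigl(1+\tfrac{\alpha i(i+1)}{2\theta}+O(\theta^{-2})\bigr)$; and the Stirling formula \eqref{stir} gives $\Gamma(\theta+n+i\alpha)/\Gamma(\theta+n)=(\theta+n)^{i\alpha}\bigl(1+\tfrac{i\alpha(i\alpha-1)}{2(\theta+n)}+O((\theta+n)^{-2})\bigr)$ and $\Gamma(\theta+1)/\Gamma(\theta+i\alpha+1)=\theta^{-i\alpha}\bigl(1-\tfrac1{\theta+1}\bigr)^{i\alpha}\bigl(1-\tfrac{i\alpha(i\alpha-1)}{2(\theta+1)}+O(\theta^{-2})\bigr)$. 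Multiplying the four leading terms, the $i$-th summand of $\E[K^r]$ becomes $(-1)^{r-i}\binom{r}{i}(\theta/\alpha)^r v^i\mu_i$, where $v:=\bigl(\tfrac{n+\theta}{\theta}\bigr)^\alpha$ and $\mu_i=1+O(1/\theta)$ uniformly in $i\le r$.

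The next step uses the elementary identity $\sum_{i=0}^{r}(-1)^{r-i}\binom{r}{i}v^i=(v-1)^r$: summing the $i$-th summands and dividing by $(\theta/\alpha)^r(v-1)^r=\bigl(\tfrac{\theta}{\alpha}\{(\tfrac{n+\theta}{\theta})^\alpha-1\}\bigr)^r$ gives $1$ plus the normalized error $\sum_{i=0}^r(-1)^{r-i}\binom{r}{i}v^i(\mu_i-1)/(v-1)^r$. Since $v\to\infty$ under \eqref{AR}, $v^i/(v-1)^r$ is bounded for $i\le r$, so the first assertion follows once $|\mu_i-1|=O(\theta^{2\alpha}/n^{2\alpha}+\theta/n+1/\theta)$ is checked. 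For the refined assertion, writing $(v-1)^{-r}=v^{-r}(1-v^{-1})^{-r}$ shows that every term with $i<r$ contributes only $O\bigl(v^{-1}\max_i|\mu_i-1|\bigr)=O\bigl(\tfrac1\theta\cdot\tfrac{\theta^\alpha}{n^\alpha}\bigr)$, while the $i=r$ term contributes $(\mu_r-1)(1+O(1/v))$. It remains to evaluate $\mu_r-1$ to order $1/\theta$: because $R(r,r,\cdot)=1$ exactly, its $1/\theta$-part is the sum $\tfrac{\alpha r(r+1)}{2\theta}$ (Pochhammer) $-\tfrac{r\alpha(r\alpha-1)}{2\theta}$ (the $\Gamma(\theta+1)/\Gamma(\theta+r\alpha+1)$ factor) $-\tfrac{r\alpha}{\theta}$ (the base change $(1-\tfrac1{\theta+1})^{r\alpha}$), which telescopes to $\tfrac{\alpha r}{2\theta}\{(r+1)-(r\alpha-1)-2\}=r^2\tfrac{\alpha(1-\alpha)}{2\theta}$; the remaining $\Gamma(\theta+n+r\alpha)/\Gamma(\theta+n)$ contribution is of order $1/(\theta+n)$ and, with the $O(\theta^{-2})$ remainders, is absorbed into the stated error.

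The main difficulty is the error bookkeeping rather than any single idea. One must confirm that each expansion above holds uniformly under \eqref{AR} — the $\Gamma(\theta+\cdot\,)$ ratios and the Pochhammer factor need $\theta\to\infty$, the $\Gamma(\theta+n+\cdot\,)$ ratio needs $\theta+n\to\infty$, and the implied constants depend only on $r$ and $\alpha$ because $i\le r$ is fixed — and then combine the pieces ($O(1/\theta)$ from the Pochhammer, $\Gamma(\theta+\cdot\,)$, and base-change factors, $O(1/(\theta+n))$ from $\Gamma(\theta+n+\cdot\,)$, $O(1/v)$ from $(1-v^{-1})^{-r}$, weighted by $v^i/(v-1)^r$) into the two composite errors $\theta^{2\alpha}/n^{2\alpha}+\theta/n+1/\theta$ and $\theta^{2\alpha}/n^{2\alpha}+\theta/n+\tfrac1\theta(\theta^\alpha/n^\alpha+1/\theta)$; the two supplementary conditions $\theta^{2\alpha+1}/n^{2\alpha}\to0$ and $\theta^2/n\to0$ are precisely what force the correction $r^2\alpha(1-\alpha)/(2\theta)$ to dominate. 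A secondary point deserving care is the cancellation producing this clean coefficient, together with the structural contrast with the proof of Theorem~\ref{thm1}: here $\theta$ is large, so the factor $R(r,i,\theta/\alpha)$ makes all $r+1$ summands of \eqref{YS1} contribute at the same order after the binomial telescoping, and none may be dropped.
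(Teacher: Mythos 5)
Your argument is correct, and it reaches both assertions by a route that differs in structure from the paper's. The paper first proves Lemma~\ref{kle} (normalizing by $\frac{\theta}{\alpha}(\frac{n}{\theta})^\alpha$ and treating the summands of \eqref{YS1} asymmetrically: the terms $i\le r-2$ are bounded crudely via $R(r,i,\theta/\alpha)=O(\theta^{r-i})$, which is the sole source of the $\theta^{2\alpha}/n^{2\alpha}$ error, while $i=r-1$ and $i=r$ are expanded via Lemma~\ref{keyp}), and then converts to the normalization $\frac{\theta}{\alpha}\{(\frac{n+\theta}{\theta})^\alpha-1\}$ through \eqref{t322a}, cancelling the intermediate $-r(\theta/n)^\alpha$ term. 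You instead normalize by the target quantity at the outset, write every summand as $(-1)^{r-i}\binom{r}{i}(\theta/\alpha)^r v^i\mu_i$ with $v=(\frac{n+\theta}{\theta})^\alpha$ and $\mu_i=1+O(1/\theta)$, and let the exact identity $\sum_{i=0}^r(-1)^{r-i}\binom{r}{i}v^i=(v-1)^r$ annihilate all leading terms at once; only the corrections $\mu_i-1$, weighted by the bounded factors $v^i/(v-1)^r$, survive, and the $1/\theta$-coefficient at $i=r$ telescopes to $r^2\alpha(1-\alpha)/(2\theta)$ exactly as in Lemma~\ref{keyp}. Your computations check out (the second-order coefficient $\tfrac{i(r-i)\alpha}{2\theta}$ of $R(r,i,\theta/\alpha)$ follows from Carlitz's (3.2) with $S_2(i+1,i)=\binom{i+1}{2}$, and the three $1/\theta$-contributions at $i=r$ sum correctly). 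What your approach buys is a structural cancellation in place of a computed one: since the leading parts of the $i\le r-2$ terms cancel exactly rather than being majorized, your remainders are in fact sharper than the stated ones --- the first assertion comes out as $1+O(1/\theta)$ and the refined one with error $O(\frac{1}{\theta}(\frac{\theta^\alpha}{n^\alpha}+\frac1\theta))$, so the supplementary conditions $\theta^{2\alpha+1}/n^{2\alpha}\to0$ and $\theta^2/n\to0$ would not even be needed under your analysis (they are needed only to make the paper's cruder error terms negligible against $1/\theta$). The price is that you must control the exact leading coefficient $\binom{r}{i}(\theta/\alpha)^{r-i}$ of $R(r,i,\theta/\alpha)$ for every $i\le r$, not just $i\in\{r-1,r\}$; this is available from Carlitz's formula, so there is no gap.
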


Before presenting the proof of Theorem~\ref{mthA}, let us prepare the following lemma.

\begin{lemma}\label{kle}
Suppose that $\alpha>0$.
For $r=1,2,\ldots$, it holds that
\[ 
{\sf E}\left[ \left\{ \frac{\alpha K}{\theta (\frac{n}{\theta})^\alpha } \right\}^r \right] 
= 1 - r\left( \frac{\theta}{n} \right)^\alpha + r^2 \frac{\alpha(1-\alpha)}{2 \theta} 
+O \left(  \frac{\theta^{\alpha}}{n^{\alpha}}\left( \frac{\theta^{\alpha}}{n^{\alpha}} + \frac{\theta^{1-\alpha}}{n^{1-\alpha}} \right) + \frac{1}{\theta}\left(  \frac{\theta^\alpha}{n^\alpha } + \frac{1}{\theta} \right) \right)\]
under the asymptotic regime of \eqref{AR}.
\end{lemma}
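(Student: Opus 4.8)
The plan is to work from the exact identity \eqref{YS1}. Dividing it by $\{\theta(n/\theta)^\alpha/\alpha\}^r$ gives
\[
\E\!\left[\left\{\frac{\alpha K}{\theta(n/\theta)^\alpha}\right\}^{r}\right]
=\frac{\alpha^{r}\theta^{r\alpha}}{\theta^{r}n^{r\alpha}}\sum_{i=0}^{r}(-1)^{r-i}\left(1+\frac{\theta}{\alpha}\right)_{i}R\!\left(r,i,\frac{\theta}{\alpha}\right)\frac{\Gamma(\theta+i\alpha+n)}{\Gamma(\theta+n)}\frac{\Gamma(\theta+1)}{\Gamma(\theta+i\alpha+1)},
\]
and the idea is to expand each of the four $i$-dependent factors jointly as $\theta\to\infty$ and $\theta+n\to\infty$ (both hold under \eqref{AR}), retaining terms down to order $1/\theta$. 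Using the ratio asymptotics $\Gamma(z+a)/\Gamma(z+b)=z^{a-b}\{1+(a-b)(a+b-1)/(2z)+O(z^{-2})\}$ I would obtain, uniformly for $i\in\{0,\dots,r\}$,
\[
\left(1+\frac{\theta}{\alpha}\right)_{i}=\left(\frac{\theta}{\alpha}\right)^{i}\!\left(1+\frac{i(i+1)\alpha}{2\theta}+O(\theta^{-2})\right),\quad
\frac{\Gamma(\theta+1)}{\Gamma(\theta+i\alpha+1)}=\theta^{-i\alpha}\!\left(1-\frac{i\alpha(i\alpha+1)}{2\theta}+O(\theta^{-2})\right),
\]
together with $\Gamma(\theta+i\alpha+n)/\Gamma(\theta+n)=(\theta+n)^{i\alpha}\{1+O(1/n)\}$. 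For the weighted Stirling number I would keep its two leading powers of $\theta/\alpha$ by applying the identity (3.2) of Carlitz~\cite{RefC} already used in the proof of Theorem~\ref{thm1}, namely $R(r,i,x)=\sum_{j}\binom{r}{j}x^{j}S_{2}(r-j,i)$, with $S_{2}(i,i)=1$ and $S_{2}(i+1,i)=\binom{i+1}{2}$; this yields $R(r,i,\theta/\alpha)=\binom{r}{i}(\theta/\alpha)^{r-i}\{1+i(r-i)\alpha/(2\theta)+O(\theta^{-2})\}$.

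Multiplying the four expansions, the four $1/\theta$-terms combine to $i\alpha(r-i\alpha)/(2\theta)$, so the $i$-th summand equals $(-1)^{r-i}\binom{r}{i}(\theta/\alpha)^{r}u^{i}\{1+i\alpha(r-i\alpha)/(2\theta)+O(1/n+\theta^{-2})\}$ with $u:=\{(\theta+n)/\theta\}^{\alpha}$. I would then sum over $i$ using the elementary identities $\sum_{i}(-1)^{r-i}\binom{r}{i}u^{i}=(u-1)^{r}$, $\sum_{i}(-1)^{r-i}\binom{r}{i}\,i\,u^{i}=ru(u-1)^{r-1}$ and $\sum_{i}(-1)^{r-i}\binom{r}{i}\,i^{2}u^{i}=ru(u-1)^{r-1}+r(r-1)u^{2}(u-1)^{r-2}$, all obtained by differentiating $(x-1)^{r}$. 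Reinstating the prefactor $\alpha^{r}\theta^{r\alpha}/(\theta^{r}n^{r\alpha})$, the crucial simplification is that it turns $(\theta/\alpha)^{r}(u-1)^{r}$ into $v^{r}$, where $v:=(1+\theta/n)^{\alpha}-(\theta/n)^{\alpha}$, while it turns $(\theta/\alpha)^{r}u(u-1)^{r-1}$ into $(1+\theta/n)^{\alpha}v^{r-1}$ and $(\theta/\alpha)^{r}u^{2}(u-1)^{r-2}$ into $(1+\theta/n)^{2\alpha}v^{r-2}$, each of which equals $1+O((\theta/n)^{\alpha})$. Collecting the $1/\theta$ contributions gives coefficient $\tfrac12\{\alpha r^{2}-\alpha^{2}r-\alpha^{2}r(r-1)\}=\tfrac12 r^{2}\alpha(1-\alpha)$, and since $v=1-(\theta/n)^{\alpha}+O(\theta/n)$ one has $v^{r}=1-r(\theta/n)^{\alpha}+O((\theta/n)^{2\alpha}+\theta/n)$. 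Assembling these, and using $1/n\le\theta^{\alpha-1}n^{-\alpha}$ under \eqref{AR} to absorb the $O(1/n)$ remainders into $O(\theta^{-1}(\theta/n)^{\alpha})$, reproduces the claimed expansion and its error term.

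The step I expect to be the main obstacle is the control of the error terms. Because $\E[K^{r}]$ is a signed sum of quantities each of size $(\theta/\alpha)^{r}(n/\theta)^{r\alpha}$, the relative errors $O(1/\theta)$, $O(1/n)$, $O(\theta^{-2})$ in the per-term expansions are multiplied by this large factor before any cancellation occurs, so one must verify that after the cancellation they remain within the stated bound. The clean way to see this is to bound the sum of the error contributions crudely by $\sum_{i}\binom{r}{i}u^{i}=(u+1)^{r}$; since $u\asymp(n/\theta)^{\alpha}\to\infty$ under \eqref{AR}, $(u+1)^{r}$ is of the same order $u^{r}$ as the leading term $(u-1)^{r}$, so a uniform relative error $O(\varepsilon)$ in the summands contributes only $O(\varepsilon)$ to the normalized quantity. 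The other delicate point is the subleading coefficient of $R(r,i,\theta/\alpha)$: the term $i(r-i)\alpha/(2\theta)$ is exactly what is needed for the final $1/\theta$-coefficient to equal $r^{2}\alpha(1-\alpha)/2$, and it rests on $S_{2}(i+1,i)=\binom{i+1}{2}$ together with the binomial ratio $\binom{r}{r-i-1}/\binom{r}{r-i}=(r-i)/(i+1)$.
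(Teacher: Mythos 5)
Your proposal is correct, but it follows a genuinely different route from the paper. The paper's proof splits the sum in \eqref{YS1} into the single terms $i=r$ and $i=r-1$, which it evaluates precisely via Lemma~\ref{keyp} (using the explicit value $R(r,r-1,\theta/\alpha)=r(r-1)/2+r\theta/\alpha$), and the block $i\le r-2$, which it discards wholesale using the crude bound $R(r,i,\theta/\alpha)=O(\theta^{r-i})$; the $-r(\theta/n)^\alpha$ term then comes from the $i=r-1$ summand and the $r^2\alpha(1-\alpha)/(2\theta)$ term from the $i=r$ summand. You instead expand every summand uniformly, extract the two leading powers of $\theta/\alpha$ from $R(r,i,\theta/\alpha)$ via Carlitz's identity, and collapse the alternating sum exactly with the identities $\sum_i(-1)^{r-i}\binom{r}{i}u^i=(u-1)^r$ and its first two $u\,d/du$ derivatives; the $-r(\theta/n)^\alpha$ term then emerges from expanding $v^r$ with $v=(1+\theta/n)^\alpha-(\theta/n)^\alpha$. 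I checked the key computations: the $1/(2\theta)$-coefficient of the $i$-th summand is indeed $i\alpha(r-i\alpha)$ (the three contributions $i(i+1)\alpha$, $i(r-i)\alpha$, $-i\alpha(i\alpha+1)$ combine correctly), the subleading coefficient of $R$ is $i(r-i)\alpha/(2\theta)$ as you claim, and the collected coefficient $\frac{\alpha}{2}\{r^2-\alpha r-\alpha r(r-1)\}=\frac{r^2\alpha(1-\alpha)}{2}$ is right. Your flagged obstacle --- that signed cancellation could amplify the per-term relative errors --- is resolved exactly as you say: since $u\to\infty$ under \eqref{AR}, the crude majorant $(u+1)^r$ is of the same order as the leading term $(u-1)^r$, so uniform relative errors pass through; and $O(1/n)\subseteq O(\theta^{-1}(\theta/n)^\alpha)$ once $\theta\le n$. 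What each approach buys: the paper's argument is shorter and needs only the asymptotics of the top two summands, at the cost of an opaque $O((\theta/n)^{2\alpha})$ error inherited from the discarded $i=r-2$ term; yours exposes the closed binomial structure of the moment (essentially $(\theta/\alpha)^r$ times a finite difference), keeps all terms exactly at the two leading orders, and would extend more readily to higher-order expansions.
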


\begin{proof}
It follows from \eqref{YS1} that
\begin{eqnarray*}
{\sf E}\left[ K^r  \right] 
&=&\sum_{i=0}^{r-2} (-1)^{r-i} \left( 1 +\frac{\theta}{\alpha} \right)_i R \left(r, i, \frac{\theta}{\alpha}\right) \frac{\Gamma(\theta + i \alpha +n) \Gamma(\theta+1)}{\Gamma(\theta+n) \Gamma(\theta+i \alpha +1)} \\&&
 - \left( 1 +\frac{\theta}{\alpha} \right)_{r-1} \left\{ \frac{r(r-1)}{2} 
+ \frac{r\theta}{\alpha}  \right\} \frac{\Gamma(\theta + (r-1) \alpha +n) \Gamma(\theta+1)}{\Gamma(\theta+n) \Gamma(\theta+ (r-1) \alpha +1)} 
\\&& 
+ \left( 1 +\frac{\theta}{\alpha} \right)_r \frac{\Gamma(\theta + r \alpha +n) \Gamma(\theta+1)}{\Gamma(\theta+n) \Gamma(\theta+ r \alpha +1)} \\
&=& \sum_{i=0}^{r-2} (-1)^{r-i}  R \left(r, i, \frac{\theta}{\alpha}\right) \frac{\Gamma\left( \frac{\theta}{\alpha} +1 + i \right)}{\Gamma\left( \frac{\theta}{\alpha} +1 \right)} \frac{\Gamma(\theta + n  + i \alpha ) }{\Gamma(\theta+n) } \frac{\Gamma(\theta+1)}{\Gamma(\theta + 1 +i \alpha)} 
\\&& 
- \left\{ \frac{r(r-1)}{2} + \frac{r\theta}{\alpha}  \right\} \frac{\Gamma\left( \frac{\theta}{\alpha} + r \right)}{\Gamma\left( \frac{\theta}{\alpha} +1 \right)} \frac{\Gamma(\theta + n  + (r-1) \alpha ) }{\Gamma(\theta+n) } \frac{\Gamma(\theta+1)}{\Gamma(\theta + 1 + (r-1) \alpha)}  
\\&&
 + \frac{\Gamma\left( \frac{\theta}{\alpha} +1 + r \right)}{\Gamma\left( \frac{\theta}{\alpha} +1 \right)}  \frac{\Gamma(\theta + n  + r \alpha ) }{\Gamma(\theta+n) } \frac{\Gamma(\theta+1)}{\Gamma(\theta + 1 +r \alpha)} .
\end{eqnarray*}
As $R(r,i,\theta/\alpha) = O(\theta^{r-i})$ for $i=0,1,\ldots,r-2$, which follows from (3.2) of \cite{RefC}, according to Lemma~\ref{keyp}, the first term is  
\[ 
O\left( \theta^2 \left(\theta \frac{n^\alpha}{\theta^\alpha} \right)^{r-2}  \right)
= O\left( \left(\theta \frac{n^\alpha}{\theta^\alpha} \right)^r \frac{\theta^{2\alpha}}{n^{2\alpha}} \right) \]
that stems from the term of $i=r-2$.
Moreover, using Lemma~\ref{keyp} again, the second term is
\begin{eqnarray*}
&&
-\left( O(1) + \frac{r\theta}{\alpha} \right) \left( \frac{\theta}{\alpha} \frac{n^\alpha}{\theta^\alpha} \right)^{r-1} \left(1+ O\left( \frac{\theta}{n} + \frac{1}{\theta} \right) \right) 
\\&&
=  -\left( \frac{\theta}{\alpha} \frac{n^\alpha}{\theta^\alpha} \right)^{r}  \left\{ r \left( \frac{\theta}{n} \right)^\alpha + O\left( \frac{\theta^{\alpha+1}}{n^{\alpha+1}} + \frac{1}{n^\alpha \theta^{1-\alpha}} \right) \right\}
\end{eqnarray*}
and the third term is
\begin{eqnarray*}
&&
\left( \frac{\theta}{\alpha} \frac{n^\alpha}{\theta^\alpha} \right)^r 
\left\{ 1+ \frac{r \alpha \theta}{n} + \frac{r^2 \alpha(1-\alpha)}{2\theta} + O \left( \frac{1}{\theta^2} + \frac{\theta^2}{n^2} \right)  \right\}
\\&&
=\left( \frac{\theta}{\alpha} \frac{n^\alpha}{\theta^\alpha} \right)^r 
\left\{ 1+ \frac{r^2 \alpha(1-\alpha)}{2\theta} + O \left( \frac{1}{\theta^2} + \frac{\theta}{n} \right)  \right\}  .
\end{eqnarray*}
These formulae yield
\[
{\sf E}\left[ K^r  \right] 
= \left( \frac{\theta}{\alpha} \frac{n^\alpha}{\theta^\alpha} \right)^r \left\{ 1 - r \left( \frac{\theta}{n} \right)^\alpha + \frac{r^2 \alpha(1-\alpha)}{2\theta}  +O \left( \frac{\theta^{2\alpha}}{n^{2\alpha}} + \frac{1}{n^\alpha \theta^{1-\alpha}} + \frac{1}{\theta^2} + \frac{\theta}{n} \right)  \right\},
\]
where
\[ 
O \left(\frac{\theta^{2\alpha}}{n^{2\alpha}} + \frac{\theta^{\alpha+1}}{n^{\alpha+1}}+ \frac{1}{n^\alpha \theta^{1-\alpha}} + \frac{1}{\theta^2} + \frac{\theta}{n}  \right)  
= O \left( \frac{\theta^{2\alpha}}{n^{2\alpha}}+ \frac{1}{n^\alpha \theta^{1-\alpha}} + \frac{1}{\theta^2} + \frac{\theta}{n} \right)
\]
is used.
Thus, it holds that
\[
{\sf E}\left[ \left\{ \frac{\alpha K}{\theta (\frac{n}{\theta})^\alpha} \right\}^r  \right] 
= 1 - r \left( \frac{\theta}{n} \right)^\alpha + \frac{r^2 \alpha(1-\alpha)}{2\theta} 
 +  O \left( \frac{\theta^{2\alpha}}{n^{2\alpha}} + \frac{1}{n^\alpha \theta^{1-\alpha}}+ \frac{1}{\theta^2}+ \frac{\theta}{n} \right).
\]
This completes the proof.
\end{proof}

Using Lemma~\ref{kle}, we prove Theorem~\ref{mthA}.

\begin{proof}[Proof of Theorem~\ref{mthA}]
It follows from
\[ 
\frac{( \frac{n+\theta}{\theta} )^\alpha -1}{(\frac{n}{\theta})^\alpha}
= \left( 1 + \frac{\theta}{n} \right)^\alpha - \left( \frac{\theta}{n} \right)^\alpha
= 1 - \left( \frac{\theta}{n} \right)^\alpha  + O\left( \frac{\theta}{n} \right) 
\]
that
\begin{equation}\label{t322a}
\left\{\frac{(\frac{n}{\theta})^\alpha}{( \frac{n+\theta}{\theta} )^\alpha -1}\right\}^{r} 
= 1  + r \left(\frac{\theta}{n}\right)^\alpha +  O\left( \frac{\theta^{2\alpha}}{n^{2\alpha}} +  \frac{\theta}{n} \right) .
\end{equation}
Lemma~\ref{kle} and \eqref{t322a} yield
\begin{eqnarray*}
\lefteqn{
{\sf E}\left[ \left( \frac{\alpha K}{\theta \left\{ (\frac{n+\theta}{\theta})^\alpha - 1 \right\} } \right)^r \right] 
= \left\{\frac{(\frac{n}{\theta})^\alpha}{( \frac{n+\theta}{\theta} )^\alpha -1}\right\}^{r} {\sf E}\left[ \left\{ \frac{\alpha K}{\theta (\frac{n}{\theta})^\alpha } \right\}^r \right] 
} \\
&=& \left\{ 1  + r \left(\frac{\theta}{n}\right)^\alpha +  O\left( \frac{\theta^{2\alpha}}{n^{2\alpha}} +  \frac{\theta}{n} \right) \right\} 
\\ && \times
\left\{ 1 - r\left( \frac{\theta}{n} \right)^\alpha + r^2 \frac{\alpha(1-\alpha)}{2 \theta} 
+O \left(  \frac{\theta^{\alpha}}{n^{\alpha}}\left( \frac{\theta^{\alpha}}{n^{\alpha}} + \frac{\theta^{1-\alpha}}{n^{1-\alpha}} \right) + \frac{1}{\theta}\left(  \frac{\theta^\alpha}{n^\alpha } + \frac{1}{\theta} \right) \right) \right\} \\
&=&   1 + r^2 \frac{\alpha(1-\alpha)}{2 \theta} 
+O \left(  \frac{\theta^{2\alpha}}{n^{2\alpha}} +  \frac{\theta}{n} + \frac{1}{\theta}\left(  \frac{\theta^\alpha}{n^\alpha } + \frac{1}{\theta} \right) \right).
\end{eqnarray*}
It implies the desired conclusion.
This completes the proof.
\end{proof}

\begin{corollary}
%[Convergence in probability of $K$ under \eqref{AR}]
\label{cor34}
Suppose that $\alpha>0$.
It holds that
\begin{equation}\label{coreq}
\frac{\alpha K}{\theta \left\{ (\frac{n+\theta}{\theta})^\alpha - 1 \right\} } \to^p 1
\end{equation}
under the asymptotic regime of \eqref{AR}, where $\to^p$ denotes convergence in probability.
\end{corollary}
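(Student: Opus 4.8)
The plan is to read off convergence in probability from the moment convergence already contained in Theorem~\ref{mthA}, using Chebyshev's inequality (equivalently, $L^2$-convergence). Write $Z = Z_{n,\theta} = \frac{\alpha K}{\theta\{(\frac{n+\theta}{\theta})^\alpha-1\}}$ for brevity. The first display of Theorem~\ref{mthA} gives $\E[Z^r] = 1 + O\!\left(\frac{\theta^{2\alpha}}{n^{2\alpha}} + \frac{\theta}{n} + \frac{1}{\theta}\right)$ for every $r=1,2,\ldots$ \emph{under \eqref{AR} alone}, with no need for the extra conditions $\theta^{2\alpha+1}/n^{2\alpha}\to0$ and $\theta^2/n\to0$ that are required only for the refined expansion.

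First I would note that this $O$-term does tend to $0$ under \eqref{AR}: since $\alpha>0$ and $\theta/n\to0$, we have $\theta^{2\alpha}/n^{2\alpha}=(\theta/n)^{2\alpha}\to0$; directly $\theta/n\to0$; and $\theta\to\infty$ gives $1/\theta\to0$. Hence $\E[Z^r]\to1$ for every fixed $r$, and in particular $\E[Z]\to1$ and $\E[Z^2]\to1$. Consequently $\E[(Z-1)^2]=\E[Z^2]-2\E[Z]+1\to1-2+1=0$, i.e.\ $Z\to1$ in $L^2$ under \eqref{AR}.

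Finally, for fixed $\varepsilon>0$, once $n$ and $\theta$ are large enough that $|\E[Z]-1|<\varepsilon/2$, the triangle inequality together with Chebyshev's inequality yields
\[
\mathsf{P}(|Z-1|>\varepsilon)\le \mathsf{P}\!\left(|Z-\E[Z]|>\tfrac{\varepsilon}{2}\right)\le \frac{4\,\mathrm{Var}(Z)}{\varepsilon^2}\le\frac{4\,\E[(Z-1)^2]}{\varepsilon^2}\longrightarrow 0,
\]
which is exactly \eqref{coreq}. (Alternatively one can simply quote that $L^2$-convergence implies convergence in probability and skip the explicit Chebyshev estimate.)

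The argument has essentially no obstacle: the only point demanding a line of care is verifying that the single $O$-term in Theorem~\ref{mthA} vanishes under \eqref{AR} by itself, which is the short computation with the three summands above. Everything else is the standard "second moment $\to$ square of first moment $\Rightarrow$ variance $\to 0$ $\Rightarrow$ convergence in probability'' template.
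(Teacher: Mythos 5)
Your argument is correct and is essentially the paper's own proof: the paper likewise deduces \eqref{coreq} from Theorem~\ref{mthA} by noting that the first and second moments of the normalized quantity converge to $1$, which forces the variance to vanish and hence gives convergence in probability. Your additional verification that the single $O$-term already tends to $0$ under \eqref{AR} alone, and the explicit Chebyshev step, are just the details the paper leaves implicit.
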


\begin{proof}
Using Theorem~\ref{mthA}, the first and second moments of the left-hand side in \eqref{coreq} converge to 1.
This completes the proof.
\end{proof}

This corollary shows that, under the asymptotic regime of \eqref{AR}, $K$ may asymptotically behave as if $\alpha$ was 0 from the perspective of the following remark.

\begin{remark}
When $\alpha=0$, it is known that
\begin{equation}\label{correm}
\frac{K_{n,\theta,0} }{\theta \log\left(1+ \frac{n}{\theta} \right)} \to^p 1 
\end{equation}
under the asymptotic regime of \eqref{AR}; see \cite{RefFa,RefT17}.
Hence, it holds that
\[  \frac{\alpha K_{n,\theta,\alpha}}{\theta \left\{ (\frac{n+\theta}{\theta})^\alpha - 1 \right\} } 
\underset{\alpha\to +0}{\Rightarrow} \frac{K_{n,\theta,0}}{\theta \log\left(1+ \frac{n}{\theta} \right)} \underset{\eqref{AR}}{\to^p} 1. \]
Corollary~\ref{cor34} shows the same limit without the first operation $\alpha \to +0$.
In this sense, \eqref{coreq} complements the previous result of \eqref{correm}.
\end{remark}

\section{Technical lemma}

In this section, we prove the following lemma, which was used in the proof of Lemma~\ref{kle}.

\begin{lemma}\label{keyp}
Under the asymptotic regime of \eqref{AR}, it holds that
\begin{eqnarray*}
&&
\frac{\Gamma\left(\frac{\theta}{\alpha}+1+i\right)}{\Gamma\left(\frac{\theta}{\alpha}+1\right)}
\frac{\Gamma(\theta+n+i\alpha)}{\Gamma(\theta+n)}
\frac{\Gamma(\theta+1)}{\Gamma(\theta+1+i\alpha)} 
\\&&
=
\left( \frac{\theta}{\alpha} \frac{n^\alpha}{\theta^\alpha} \right)^i \left\{ 1+ \frac{i \alpha \theta}{n} + \frac{i^2 \alpha(1-\alpha)}{2\theta} + O \left( \frac{1}{\theta^2} + \frac{\theta^2}{n^2} \right)  \right\}
\end{eqnarray*}
for $i=1,2,\ldots$.
\end{lemma}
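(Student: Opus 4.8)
The plan is to write the left-hand side as a product of three ratios, $A_i B_i C_i$, where $A_i = \Gamma(\frac{\theta}{\alpha}+1+i)/\Gamma(\frac{\theta}{\alpha}+1)$, $B_i = \Gamma(\theta+n+i\alpha)/\Gamma(\theta+n)$, and $C_i = \Gamma(\theta+1)/\Gamma(\theta+1+i\alpha)$, and to expand each factor separately to the order required by \eqref{AR}, keeping in mind that $i$ is fixed. For the first factor, the functional equation of $\Gamma$ gives $A_i = \prod_{j=1}^{i}\bigl(\frac{\theta}{\alpha}+j\bigr) = \bigl(\frac{\theta}{\alpha}\bigr)^i \prod_{j=1}^{i}\bigl(1+\frac{j\alpha}{\theta}\bigr)$; taking logarithms and using $\log(1+x)=x+O(x^2)$ yields $A_i = \bigl(\frac{\theta}{\alpha}\bigr)^i\bigl\{1 + \frac{\alpha i(i+1)}{2\theta} + O(\theta^{-2})\bigr\}$ as $\theta\to\infty$ (recall that $\theta/\alpha\to\infty$ under \eqref{AR}).

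For $B_i$ I would reuse the computation already made in the proof of Theorem~\ref{thm1}, namely $B_i = n^{i\alpha}\bigl[1 + \frac{i\alpha\{n(2\theta+i\alpha-1)+2\theta^2\}}{2n(n+\theta)} + O(n^{-2})\bigr]$, which follows from \eqref{stir}; expanding the middle term under \eqref{AR} (where $\theta/n\to0$) collapses it to $\frac{i\alpha\theta}{n}$ up to errors of size $O(n^{-1}+\theta^2 n^{-2})$, so $B_i = n^{i\alpha}\bigl\{1 + \frac{i\alpha\theta}{n} + O(n^{-1}+\theta^2 n^{-2})\bigr\}$. For $C_i$ I would apply the standard asymptotic $\Gamma(x+a)/\Gamma(x) = x^{a}\bigl\{1+\frac{a(a-1)}{2x}+O(x^{-2})\bigr\}$ (again a consequence of \eqref{stir}) with $x=\theta+1$ and $a=i\alpha$, invert it, and then expand $(\theta+1)^{-i\alpha} = \theta^{-i\alpha}\{1-\frac{i\alpha}{\theta}+O(\theta^{-2})\}$; collecting the two $\theta^{-1}$-contributions gives $C_i = \theta^{-i\alpha}\bigl\{1 - \frac{i\alpha(i\alpha+1)}{2\theta} + O(\theta^{-2})\bigr\}$. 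At this point the one observation that makes the bookkeeping work is that, under \eqref{AR}, $\frac1n = O\bigl(\theta^{-2}+\theta^2 n^{-2}\bigr)$: for every pair $(n,\theta)$ one of $\theta^2\le n$ (so $\frac1n\le\theta^{-2}$) or $n\le\theta^2$ (so $\frac1n\le\theta^2 n^{-2}$) holds; hence the $O(n^{-1})$ error in $B_i$ is already within the target error.

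Finally I would multiply the three expansions. The leading factors combine as $\bigl(\frac{\theta}{\alpha}\bigr)^i n^{i\alpha}\theta^{-i\alpha} = \bigl(\frac{\theta}{\alpha}\cdot\frac{n^\alpha}{\theta^\alpha}\bigr)^i$, matching the claimed prefactor. The $\theta^{-1}$-corrections, which come only from $A_i$ and $C_i$, add up to $\frac{\alpha i(i+1)}{2\theta}-\frac{i\alpha(i\alpha+1)}{2\theta} = \frac{i\alpha}{2\theta}\{(i+1)-(i\alpha+1)\} = \frac{i^2\alpha(1-\alpha)}{2\theta}$; the $\theta/n$-correction $\frac{i\alpha\theta}{n}$ comes from $B_i$; and every product of two or more correction terms, together with all the $O$-terms, is $O\bigl(\theta^{-2}+(n\theta)^{-1}+\theta^2 n^{-2}\bigr)=O\bigl(\theta^{-2}+\theta^2 n^{-2}\bigr)$ by the observation above (and $(n\theta)^{-1}\le n^{-1}$). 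This produces exactly $\bigl(\frac{\theta}{\alpha}\cdot\frac{n^\alpha}{\theta^\alpha}\bigr)^i\bigl\{1+\frac{i\alpha\theta}{n}+\frac{i^2\alpha(1-\alpha)}{2\theta}+O(\theta^{-2}+\theta^2 n^{-2})\bigr\}$, which is the assertion. The only genuinely delicate point is the last step's error accounting — verifying that nothing of size larger than $\theta^{-2}+\theta^2 n^{-2}$ survives — and this is entirely controlled by the elementary comparison $\frac1n = O(\theta^{-2}+\theta^2 n^{-2})$ noted above; everything else is routine Taylor expansion.
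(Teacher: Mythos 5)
Your proposal is correct and follows essentially the same route as the paper: the paper likewise splits the product into the three ratios (its Lemmas~\ref{lem3}, \ref{lem1}, \ref{lem2}), obtains exactly the expansions you state, and absorbs the leftover $O(1/n)$ via $1/n = O(\theta^{-2} + \theta^2 n^{-2})$. The only cosmetic difference is that you derive the factor $\Gamma(\frac{\theta}{\alpha}+1+i)/\Gamma(\frac{\theta}{\alpha}+1)$ from the functional equation as a finite product rather than from Stirling's formula, which is if anything slightly cleaner.
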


To prove this assertion, we first prove the following three lemmas.

\begin{lemma}\label{lem1}
Under the asymptotic regime \eqref{AR}, it holds that
\[
\frac{\Gamma(\theta+n+i\alpha)}{\Gamma(\theta+n)}
= n^{i \alpha} \left( 1 + \frac{i \alpha \theta}{n} + O\left(\frac{1}{n} + \frac{\theta^2}{n^2}\right) \right) 
\]
for $i=1,2,\ldots$.
\end{lemma}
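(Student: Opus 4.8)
The plan is to estimate the ratio $\Gamma(\theta+n+i\alpha)/\Gamma(\theta+n)$ by applying the Stirling expansion \eqref{stir} to numerator and denominator, with $x = \theta+n+i\alpha$ and $x = \theta+n$ respectively. Write $N := \theta+n$, so that $N\to\infty$ and, under \eqref{AR}, $\theta/N\to 0$ while $n/N\to 1$. Since $i\alpha$ is a bounded constant, both arguments tend to infinity and the $O(1/x^2)$ error terms in \eqref{stir} are each $O(1/N^2) = O(1/n^2)$, hence absorbed into the stated $O(\theta^2/n^2)$ (indeed into a smaller term). The leading behaviour is
\[
\frac{\Gamma(N+i\alpha)}{\Gamma(N)} = \e^{-i\alpha} \frac{(N+i\alpha)^{N+i\alpha-1/2}}{N^{N-1/2}} \left(1 + O\!\left(\tfrac{1}{N}\right)\right),
\]
and the task reduces to expanding $(N+i\alpha)^{N+i\alpha-1/2} / N^{N-1/2}$.

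The key step is to write $(N+i\alpha)^{N+i\alpha-1/2} = N^{N+i\alpha-1/2}\,(1+i\alpha/N)^{N+i\alpha-1/2}$ and take logarithms:
\[
\left(N+i\alpha-\tfrac12\right)\log\!\left(1+\frac{i\alpha}{N}\right) = \left(N+i\alpha-\tfrac12\right)\left(\frac{i\alpha}{N} - \frac{i^2\alpha^2}{2N^2} + O\!\left(\tfrac{1}{N^3}\right)\right) = i\alpha + \frac{i^2\alpha^2}{2N} - \frac{i\alpha}{2N} + O\!\left(\tfrac{1}{N^2}\right).
\]
Exponentiating and multiplying by $\e^{-i\alpha}$ cancels the constant $i\alpha$, leaving $N^{i\alpha}\bigl(1 + \tfrac{i\alpha(i\alpha-1)}{2N} + O(1/N^2)\bigr)$. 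Finally I substitute $N=\theta+n$ and expand $N^{i\alpha} = n^{i\alpha}(1+\theta/n)^{i\alpha} = n^{i\alpha}(1 + i\alpha\theta/n + O(\theta^2/n^2))$, and note that the correction term $\tfrac{i\alpha(i\alpha-1)}{2N} = \tfrac{i\alpha(i\alpha-1)}{2n}(1+O(\theta/n)) = O(1/n)$ is absorbed into the error. Collecting terms gives
\[
\frac{\Gamma(\theta+n+i\alpha)}{\Gamma(\theta+n)} = n^{i\alpha}\left(1 + \frac{i\alpha\theta}{n} + O\!\left(\frac{1}{n} + \frac{\theta^2}{n^2}\right)\right),
\]
as claimed.

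The only mild obstacle is bookkeeping: one must check that every discarded term is genuinely $O(1/n + \theta^2/n^2)$ under \eqref{AR} — in particular that $\theta/n^2$, $1/N^2$, and the cross terms from multiplying the several $(1+\cdot)$ factors are all dominated by $1/n$ (they are, since $\theta = o(n)$), and that replacing $N$ by $n$ in the subleading coefficients only produces higher-order corrections. No new idea is needed beyond the Stirling expansion already recorded as \eqref{stir}; the argument is entirely a careful Taylor expansion in the small quantities $i\alpha/N$ and $\theta/n$.
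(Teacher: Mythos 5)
Your proposal is correct and follows essentially the same route as the paper: apply the Stirling expansion \eqref{stir} to numerator and denominator, obtain the intermediate form $(\theta+n)^{i\alpha}\bigl(1+\tfrac{i\alpha(i\alpha-1)}{2(\theta+n)}+O(\tfrac{1}{(\theta+n)^2})\bigr)$, and then expand $(\theta+n)^{i\alpha}=n^{i\alpha}(1+\theta/n)^{i\alpha}$ to absorb the subleading terms into $O(1/n+\theta^2/n^2)$. The only cosmetic difference is that you expand $(1+i\alpha/N)^{N+i\alpha-1/2}$ via logarithms whereas the paper multiplies out the individual $(1+\cdot)$ factors directly; the bookkeeping and conclusion are identical.
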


\begin{proof}
It follows from \eqref{stir} that
\begin{eqnarray*}
&&  \frac{\Gamma(\theta+n+i\alpha)}{\Gamma(\theta+n)} \\
&&= \frac{\e^{-(\theta+n+i \alpha)} (\theta+n +i\alpha)^{\theta+n+i\alpha-1/2} \left\{ 1 + \frac{1}{12(\theta + i\alpha +n)} + O\left( \frac{1}{(\theta+n)^2} \right) \right\} }{ \e^{-(\theta+n)} (\theta+n )^{\theta+n-1/2} \left\{1 + \frac{1}{12(\theta  +n)} + O\left( \frac{1}{(\theta+n)^2} \right) \right\} } \\
&&= 
\e^{-i\alpha} (\theta+n)^{i \alpha} \left(1 + \frac{i\alpha}{\theta+n} \right)^{i\alpha - 1/2} \left( 1 + \frac{i \alpha}{\theta+n} \right)^{\theta+n}  \left\{ \frac{ 1 + \frac{1}{12(\theta + i\alpha +n)} + O\left( \frac{1}{(\theta+n)^2} \right) }{   1 + \frac{1}{12(\theta +n)} + O\left( \frac{1}{(\theta+n)^2} \right) } \right\} .
\end{eqnarray*}
Hence, it holds that
\begin{eqnarray*}
&&
 \frac{\Gamma(\theta+n+i\alpha)}{\Gamma(\theta+n)} \\
&&=  
\e^{-i\alpha} (\theta+n)^{i \alpha} 
\left\{ 1 + \frac{i\alpha}{\theta+n}\left(i\alpha - \frac{1}{2} \right) + O\left( \frac{1}{(\theta+n)^2} \right) \right\} 
\e^{i \alpha} 
\\ && \quad \times
\left\{ 1 - \frac{i^2 \alpha^2}{2(\theta+n)}  + O\left( \frac{1}{(\theta+n)^2} \right) \right\} 
\left\{  1 + \frac{1}{12(\theta + i\alpha +n)} + O\left( \frac{1}{(\theta+n)^2} \right)  \right\} 
\\ &&\quad \times
\left\{   1 - \frac{1}{12(\theta +n)} + O\left( \frac{1}{(\theta+n)^2} \right)  \right\} \\
&&=  
 (\theta+n)^{i \alpha} 
\left\{ 1 + \frac{i\alpha}{\theta+n}\left(i\alpha - \frac{1}{2} \right) + O\left( \frac{1}{(\theta+n)^2} \right) \right\} 
\left\{ 1 - \frac{i^2 \alpha^2 }{2(\theta+n)}  + O\left( \frac{1}{(\theta+n)^2} \right) \right\}
\\ &&\quad \times
  \left\{  1 + \frac{1}{12(\theta + i\alpha +n)} - \frac{1}{12(\theta +n)}  + O\left( \frac{1}{(\theta+n)^2} \right)  \right\} \\
&&=  
 (\theta+n)^{i \alpha} 
\left\{ 1 + \frac{i\alpha}{\theta+n}\left(i\alpha - \frac{1}{2} \right) + O\left( \frac{1}{(\theta+n)^2} \right) \right\} 
\left\{ 1 - \frac{i^2 \alpha^2 }{2(\theta+n)}  + O\left( \frac{1}{(\theta+n)^2} \right) \right\}
\\ && \quad \times
 \left\{  1 + O\left( \frac{1}{(\theta+n)^2} \right)  \right\}\\
&&= (\theta+n)^{i\alpha} \left\{ 1+ \frac{i \alpha  (i \alpha -1)}{2(\theta+n)} + O\left( \frac{1}{(\theta+n)^2} \right)  \right\}.
\end{eqnarray*}
From $\theta/n \to 0$ in \eqref{AR}, it follows that
\begin{eqnarray*}
&& \frac{\Gamma(\theta+n+i\alpha)}{\Gamma(\theta+n)} \\
&&= n^{i \alpha} \left( 1 + \frac{\theta}{n} \right)^{i\alpha} \left\{ 1+ \frac{i \alpha(i \alpha -1)}{2n} \left(1+\frac{\theta}{n}\right)^{-1}  + O\left( \frac{1}{(\theta+n)^{2}}\right) \right\} \\
&&= n^{i \alpha} \left( 1 + \frac{i\alpha \theta}{n} + O\left(\frac{\theta^2}{n^2} \right) \right)
 \left\{ 1+ \frac{i \alpha(i \alpha -1)}{2n} \left(1 - \frac{\theta}{n} + O\left( \frac{\theta^2}{n^2} \right) \right)  + O\left( \frac{1}{n^{2}}\right) \right\} \\
&&= n^{i \alpha} \left\{ 1 + \frac{i \alpha \theta}{n} + \frac{i \alpha (i \alpha -1)}{2n} + O\left(\frac{\theta^2}{n^2}\right) \right\} \\
&&= n^{i \alpha} \left( 1 + \frac{i \alpha \theta}{n} + O\left(\frac{1}{n} + \frac{\theta^2}{n^2}\right) \right). 
\end{eqnarray*}
This completes the proof.
\end{proof}

\begin{lemma}\label{lem2}
As $\theta\to\infty$,
\[
\frac{\Gamma(\theta+1)}{\Gamma(\theta+1+i\alpha)}
= \theta^{-i \alpha} \left\{ 1 - \frac{i \alpha (i \alpha +1)}{2\theta} +  O\left(\frac{1}{\theta^{2}}\right) \right\} 
\]
for $i=1,2,\ldots$.
\end{lemma}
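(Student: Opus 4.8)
The plan is to apply the asymptotic expansion of the Gamma function, exactly as in Lemma~\ref{lem1}, but now exploiting the fact that $\theta\to\infty$ and $i\alpha$ is fixed, so the ratio $i\alpha/\theta$ is small. First I would write, via \eqref{stir},
\[
\frac{\Gamma(\theta+1+i\alpha)}{\Gamma(\theta+1)}
= \frac{\e^{-(\theta+1+i\alpha)}(\theta+1+i\alpha)^{\theta+1/2+i\alpha}\left(1+\frac{1}{12(\theta+1+i\alpha)}+O(\theta^{-2})\right)}{\e^{-(\theta+1)}(\theta+1)^{\theta+1/2}\left(1+\frac{1}{12(\theta+1)}+O(\theta^{-2})\right)},
\]
and then factor out $(\theta+1)^{i\alpha}$, leaving powers of $\bigl(1+\tfrac{i\alpha}{\theta+1}\bigr)$ with exponents $i\alpha$, $\tfrac12$, and $\theta+1$. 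The $\e^{-i\alpha}$ prefactor cancels against $\bigl(1+\tfrac{i\alpha}{\theta+1}\bigr)^{\theta+1}=\e^{i\alpha}\bigl(1-\tfrac{i^2\alpha^2}{2(\theta+1)}+O(\theta^{-2})\bigr)$, and the two Stirling correction factors combine to $1+O(\theta^{-2})$ just as in Lemma~\ref{lem1}. Collecting the remaining first-order terms gives
\[
\frac{\Gamma(\theta+1+i\alpha)}{\Gamma(\theta+1)}
= (\theta+1)^{i\alpha}\left\{1+\frac{i\alpha(i\alpha+1)}{2(\theta+1)}+O\!\left(\frac{1}{\theta^{2}}\right)\right\}.
\]

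Next I would convert $(\theta+1)^{i\alpha}$ to $\theta^{i\alpha}$ by writing $(\theta+1)^{i\alpha}=\theta^{i\alpha}(1+1/\theta)^{i\alpha}=\theta^{i\alpha}\bigl(1+i\alpha/\theta+O(\theta^{-2})\bigr)$, absorbing the correction, so that
\[
\frac{\Gamma(\theta+1+i\alpha)}{\Gamma(\theta+1)}
= \theta^{i\alpha}\left\{1+\frac{i\alpha}{\theta}+\frac{i\alpha(i\alpha+1)}{2\theta}+O\!\left(\frac{1}{\theta^{2}}\right)\right\}
= \theta^{i\alpha}\left\{1+\frac{i\alpha(i\alpha+3)}{2\theta}+O\!\left(\frac{1}{\theta^{2}}\right)\right\}.
\]
Hmm — the stated claim is $\theta^{-i\alpha}\bigl\{1-\tfrac{i\alpha(i\alpha+1)}{2\theta}+O(\theta^{-2})\bigr\}$ for the \emph{reciprocal}, i.e.\ the reciprocal of the displayed expression should be $\theta^{-i\alpha}\bigl\{1-\tfrac{i\alpha(i\alpha+1)}{2\theta}+O(\theta^{-2})\bigr\}$. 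So in fact I must be more careful: I suspect the correct intermediate is $(\theta+1)^{i\alpha}\bigl\{1+O(\theta^{-2})\bigr\}$ once everything is combined (the $\e^{\mp i\alpha}$, the $(1+\tfrac{i\alpha}{\theta+1})^{\theta+1}$ expansion, the $(1+\tfrac{i\alpha}{\theta+1})^{i\alpha-1/2}$ factor, and the Stirling corrections all conspire to kill the $\theta^{-1}$ term), and then $(\theta+1)^{i\alpha}=\theta^{i\alpha}(1+\tfrac{i\alpha}{\theta}+\tfrac{i\alpha(i\alpha-1)}{2\theta^2}+\cdots)$ — no, that still leaves $i\alpha/\theta$. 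The cleanest route is instead to expand $\Gamma(\theta+1)/\Gamma(\theta+1+i\alpha)$ \emph{directly}: take reciprocals at the very start, so the exponent of $\bigl(1+\tfrac{i\alpha}{\theta+1}\bigr)$ becomes $-(\theta+1)$, giving $\e^{-i\alpha}\bigl(1+\tfrac{i^2\alpha^2}{2(\theta+1)}+O(\theta^{-2})\bigr)$, which cancels the $\e^{i\alpha}$, etc., and one reads off the coefficient $-\tfrac{i\alpha(i\alpha+1)}{2}$ of $\theta^{-1}$ after also converting $(\theta+1)^{-i\alpha}\to\theta^{-i\alpha}$, whose correction $-i\alpha/\theta$ combines with the $\pm i^2\alpha^2/(2\theta)$ and $\mp i\alpha/(2\theta)$ pieces to yield exactly $-\tfrac{i\alpha(i\alpha+1)}{2\theta}$.

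The main obstacle is therefore purely bookkeeping: tracking the several $\theta^{-1}$ contributions — from $(1+\tfrac{i\alpha}{\theta+1})^{-(\theta+1)}$, from $(1+\tfrac{i\alpha}{\theta+1})^{-(i\alpha-1/2)}$, from the mismatch between the two $1/(12\cdot)$ Stirling corrections (which is $O(\theta^{-2})$ and harmless), and from the conversion $(\theta+1)^{-i\alpha}=\theta^{-i\alpha}(1-i\alpha/\theta+O(\theta^{-2}))$ — and verifying they sum to $-\tfrac{i\alpha(i\alpha+1)}{2}$. I would do this by first isolating $\log\bigl[\theta^{i\alpha}\Gamma(\theta+1)/\Gamma(\theta+1+i\alpha)\bigr]$ and expanding to order $\theta^{-1}$, since additive error-tracking in the logarithm is less error-prone than multiplicative; exponentiating at the end restores the claimed form. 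The argument is otherwise identical in structure to the proof of Lemma~\ref{lem1}, with $n$ replaced by $1$ and the roles of ``large'' and ``fixed'' swapped, so no new ideas are needed.
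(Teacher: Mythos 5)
Your final route is the paper's route: expand both Gamma functions via \eqref{stir} around $\theta+1$ (exactly as in Lemma~\ref{lem1} with $n$ replaced by $1$), obtain the expansion centered at $\theta+1$, then convert $(\theta+1)^{-i\alpha}$ to $\theta^{-i\alpha}$ and absorb the resulting $-i\alpha/\theta$ into the first-order coefficient. The detour in the middle of your write-up is caused by a single arithmetic slip, not by a flaw in the method: the Stirling computation of Lemma~\ref{lem1} gives, for fixed $a$,
\[
\frac{\Gamma(x+a)}{\Gamma(x)}=x^{a}\left\{1+\frac{a(a-1)}{2x}+O\!\left(\frac{1}{x^{2}}\right)\right\},
\]
so with $x=\theta+1$, $a=i\alpha$ the intermediate coefficient is $\tfrac{i\alpha(i\alpha-1)}{2(\theta+1)}$, not the $\tfrac{i\alpha(i\alpha+1)}{2(\theta+1)}$ you wrote (sanity check: $a=2$ gives $x(x+1)=x^2(1+1/x)$, matching $a(a-1)/2=1$). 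With the correct coefficient your first, simpler attempt closes immediately: $i\alpha+\tfrac{i\alpha(i\alpha-1)}{2}=\tfrac{i\alpha(i\alpha+1)}{2}$, and taking the reciprocal flips the sign to give exactly the stated $-\tfrac{i\alpha(i\alpha+1)}{2\theta}$. Your subsequent guess that the $(\theta+1)$-centered expansion might have no $\theta^{-1}$ term is wrong (and you correctly reject it), and the ``cleanest route'' you land on — reciprocating first, tracking the contributions $+\tfrac{i^2\alpha^2}{2\theta}$, $-\tfrac{i^2\alpha^2}{\theta}+\tfrac{i\alpha}{2\theta}$, and $-\tfrac{i\alpha}{\theta}$, which sum to $-\tfrac{i\alpha(i\alpha+1)}{2\theta}$ — is correct and is what the paper's two-line proof does implicitly. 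So the proof is salvageable as written, but you should fix the coefficient in the first display and delete the exploratory middle portion.
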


\begin{proof}
Using a similar argument as in the proof of Lemma~\ref{lem1}, we have
\[
\frac{\Gamma(\theta+1)}{\Gamma(\theta+1+i\alpha)}
= (\theta+1)^{-i\alpha} \left\{ 1 - \frac{i \alpha  (i \alpha -1)}{2(\theta+1)} + O\left(\frac{1}{\theta^{2}}\right) \right\}.
\]
Hence, it holds that
\begin{eqnarray*}
\frac{\Gamma(\theta+1)}{\Gamma(\theta+1+i\alpha)}
&=& \theta^{-i \alpha} \left(1+\frac{1}{\theta}\right)^{-i\alpha} \left\{ 1 - \frac{i \alpha  (i \alpha -1)}{2\theta}\left(1+\frac{1}{\theta}\right)^{-1} + O\left(\frac{1}{\theta^{2}}\right) \right\} \\
&=& \theta^{-i \alpha} \left(1 -\frac{i\alpha}{\theta} + O\left(\frac{1}{\theta^{2}}\right)  \right) \left\{ 1 - \frac{i \alpha  (i \alpha -1)}{2\theta} + O\left(\frac{1}{\theta^{2}}\right) \right\} \\ 
&=& \theta^{-i \alpha} \left\{ 1 - \frac{i \alpha (i \alpha +1)}{2\theta} +  O\left(\frac{1}{\theta^{2}}\right) \right\} .
\end{eqnarray*}
This completes the proof.
\end{proof}

\begin{lemma}\label{lem3}
As $\theta\to\infty$,
\[
\frac{\Gamma\left(\frac{\theta}{\alpha}+1+i\right)}{\Gamma\left(\frac{\theta}{\alpha}+1\right)}
= \left(\frac{\theta}{\alpha}\right)^{i} \left\{ 1 + \frac{i(i +1) \alpha}{2\theta} + O\left(\frac{1}{\theta^{2}} \right) \right\}
\]
for $i=1,2,\ldots$.
\end{lemma}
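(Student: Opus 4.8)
The plan is to use that $i$ is a positive \emph{integer}, so the gamma ratio telescopes into a finite product and no appeal to Stirling's formula is needed. Setting $x=\theta/\alpha$, repeated use of the functional equation $\Gamma(z+1)=z\Gamma(z)$ gives $\Gamma(x+1+i)/\Gamma(x+1)=\prod_{j=1}^{i}(x+j)$, a polynomial in $x$ of degree $i$ whose two leading coefficients are $1$ and $\sum_{j=1}^{i}j=i(i+1)/2$.

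Next I would factor out $x^{i}$ and expand the finite product,
\[
\prod_{j=1}^{i}(x+j) = x^{i}\prod_{j=1}^{i}\left(1+\frac{j}{x}\right) = x^{i}\left\{1+\frac{i(i+1)}{2x}+O\!\left(\frac{1}{x^{2}}\right)\right\},
\]
the error term being legitimate because the number of factors $i$ is fixed, so all contributions of order $x^{i-2}$ and below are bounded by a constant (depending only on $i$) times $x^{i-2}$.

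Finally I would substitute $x=\theta/\alpha$ and use that $\alpha\in(0,1)$ is a fixed constant, so $1/x=\alpha/\theta$ and $O(1/x^{2})=O(\alpha^{2}/\theta^{2})=O(1/\theta^{2})$; this yields
\[
\frac{\Gamma\!\left(\frac{\theta}{\alpha}+1+i\right)}{\Gamma\!\left(\frac{\theta}{\alpha}+1\right)} = \left(\frac{\theta}{\alpha}\right)^{i}\left\{1+\frac{i(i+1)\alpha}{2\theta}+O\!\left(\frac{1}{\theta^{2}}\right)\right\},
\]
which is the claimed identity. Alternatively one could copy the Stirling-formula computation of Lemma~\ref{lem2} verbatim with $\theta+1$ replaced by $\theta/\alpha+1$ and $i\alpha$ replaced by $i$, obtaining $(\theta/\alpha+1)^{i}\{1+i(i-1)/(2(\theta/\alpha+1))+O(\theta^{-2})\}$, and then expand $(1+\alpha/\theta)^{i}$; the elementary product route is shorter. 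There is essentially no obstacle here: the only point requiring a word of care is that the implied constants depend on the fixed quantities $i$ and $\alpha$, so absorbing powers of $\alpha$ and the lower-order polynomial terms into the error is harmless.
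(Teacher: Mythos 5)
Your proof is correct, but it takes a different route from the paper. The paper establishes this lemma by the same Stirling-formula template used for Lemmas~\ref{lem1} and \ref{lem2}: it first derives
\[
\frac{\Gamma(\frac{\theta}{\alpha}+1+i)}{\Gamma(\frac{\theta}{\alpha}+1)}
= \left(\tfrac{\theta}{\alpha}+1\right)^{i}\left\{1+\frac{i(i-1)}{2(\frac{\theta}{\alpha}+1)}+O\left(\tfrac{1}{\theta^{2}}\right)\right\}
\]
from the asymptotic expansion \eqref{stir} of the Gamma function, and then re-expands $(1+\alpha/\theta)^{i}$ to turn the coefficient $i(i-1)/2$ into $i(i+1)/2$. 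You instead exploit the fact that $i$ is a positive integer, so the ratio is exactly the finite product $\prod_{j=1}^{i}(\theta/\alpha+j)=(\theta/\alpha+1)_{i}$, whose two leading coefficients as a polynomial in $\theta/\alpha$ are $1$ and $\sum_{j=1}^{i}j=i(i+1)/2$; this gives the statement as an elementary exact identity plus a trivially controlled remainder, with no asymptotic machinery at all. Your argument is shorter and arguably cleaner for this particular lemma; the paper's choice buys uniformity, since Lemma~\ref{lem1} (where the shift $i\alpha$ is not an integer) genuinely requires Stirling's formula and the author simply reuses the same computation three times. You even note the Stirling alternative yourself, so nothing is missing.
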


\begin{proof}
Using a similar argument as in the proof of Lemma~\ref{lem1}, we have
\[
\frac{\Gamma(\frac{\theta}{\alpha} +1+i)}{\Gamma( \frac{\theta}{\alpha} +1)}
= \left(\frac{\theta}{\alpha}+1 \right)^{i} \left\{ 1 + \frac{i (i -1)}{2(\frac{\theta}{\alpha} +1)} + O\left(\frac{1}{\theta^{2}}\right) \right\}.
\]
Hence, it holds that
\begin{eqnarray*}
\frac{\Gamma(\frac{\theta}{\alpha} +1+i)}{\Gamma( \frac{\theta}{\alpha} +1)}
&=& \left( \frac{\theta}{\alpha} \right)^{i} \left(1+\frac{\alpha}{\theta}\right)^{i} \left\{ 1 + \frac{i (i  -1) \alpha }{2\theta}\left(1+\frac{\alpha}{\theta}\right)^{-1} + O\left(\frac{1}{\theta^{2}}\right) \right\} \\
&=& \left( \frac{\theta}{\alpha} \right)^{i} \left(1 + \frac{i\alpha}{\theta} + O\left(\frac{1}{\theta^{2}}\right)  \right) \left\{ 1 + \frac{i (i  -1) \alpha }{2\theta} + O\left(\frac{1}{\theta^{2}}\right) \right\} \\ 
&=& \left( \frac{\theta}{\alpha} \right)^{i} \left\{ 1 + \frac{i (i  +1) \alpha}{2\theta} +  O\left(\frac{1}{\theta^{2}}\right) \right\} .
\end{eqnarray*}
This completes the proof.
\end{proof}

\begin{proof}[Proof of Lemma~\ref{keyp}]
From Lemmas~\ref{lem1}, \ref{lem2}, and \ref{lem3}, it follows that
\begin{eqnarray*}
\lefteqn{\frac{\Gamma\left(\frac{\theta}{\alpha}+1+i\right)}{\Gamma\left(\frac{\theta}{\alpha}+1\right)}
\frac{\Gamma(\theta+n+i\alpha)}{\Gamma(\theta+n)}
\frac{\Gamma(\theta+1)}{\Gamma(\theta+1+i\alpha)} }\\
&=& 
\left( \frac{\theta}{\alpha} \frac{n^\alpha}{\theta^\alpha} \right)^i
\left( 1 + \frac{i \alpha \theta}{n} + O\left(\frac{1}{n} + \frac{\theta^2}{n^2}\right) \right) 
\\&& \times
\left\{ 1 - \frac{i \alpha (i \alpha +1)}{2\theta} +  O\left(\frac{1}{\theta^{2}}\right) \right\} 
\left\{ 1 + \frac{i (i +1) \alpha}{2\theta} + O\left(\frac{1}{\theta^{2}} \right) \right\}
\\
&=&
\left( \frac{\theta}{\alpha} \frac{n^\alpha}{\theta^\alpha} \right)^i \left\{ 1+ \frac{i \alpha \theta}{n} + \frac{i^2 \alpha(1-\alpha)}{2\theta} + O \left( \frac{1}{\theta^2} + \frac{\theta^2}{n^2} \right)  \right\},
\end{eqnarray*}
where 
\[
O \left( \frac{1}{\theta^2} + \frac{1}{n} + \frac{\theta^2}{n^2} \right) 
= O \left( \frac{1}{\theta^2} + \frac{\theta^2}{n^2} \right) 
\]
is used.
This completes the proof.
\end{proof}

\section{Concluding remark}
Under the asymptotic regime
\begin{equation}\label{CR}
n\to\infty, \quad \theta\to\infty, \quad \frac{\theta^{2\alpha+1}}{n^{2\alpha}} \to 0, \quad
 \frac{\theta^2}{n} \to 0, 
\end{equation}
Theorem~\ref{mthA} yields
${\sf E}\left[ Z \right] \to 0$ and ${\sf E}\left[ Z^2 \right] \to \alpha(1-\alpha)$, where
\[ 
Z = \sqrt{\theta} \left[ \frac{\alpha K}{\theta \left\{ (\frac{n+\theta}{\theta})^\alpha - 1 \right\} } -1 \right].
\]
We thus expect that $Z$ (or asymptotically equivalent quantities to $Z$) has a non-degenerate limit distribution under \eqref{CR} or stronger regimes.
Deriving asymptotic properties of $Z$ under such regimes is a possible future direction.

\section*{Acknowledgments}
The author was supported in part by Japan Society for the Promotion of Science KAKENHI Grant Number 18K13454 and 21K13836.
This study was partly carried out when the author was a member of Graduate School of Arts and Sciences, the University of Tokyo.

\end{document}